\renewcommand{\thefootnote}{\fnsymbol{footnote}}
\newtheorem{theorem}{Theorem}[section]
\newtheorem{lemma}[theorem]{Lemma}
\newtheorem{proposition}[theorem]{Proposition}
\newtheorem{remark}[theorem]{Remark}
\newtheorem{example}[theorem]{Example}
\newtheorem*{example*}{Example}
\newtheorem*{theorem*}{Theorem}
\newtheorem*{remark*}{Remark}
\newtheorem{corollary}[theorem]{Corollary}
\newtheorem*{corollary*}{Corollary}
\newtheorem*{definition*}{Definition}
\newtheorem*{notation*}{Notation}
\numberwithin{equation}{section}
\gdef\myletter{}
\let\savetheequation\theequation
\def\theequation{\savetheequation\myletter}
\newcommand{\CC}{{\mathbb C}}
\newcommand{\RR}{{\mathbb R}}
\newcommand{\NN}{{\mathbb N}}
\def\fl{\mathcal{F}}
\newcommand{\calF}{\mathcal{F}}
\newcommand{\calC}{\mathcal{C}}
\def \b0{{\bf 0}}
\long\def\symbolfootnote[#1]#2{\begingroup%
\def\thefootnote{\fnsymbol{footnote}}\footnote[#1]{#2}\endgroup}
\begin{document}

\title[Properties of fast Leja points]{On the approximation properties of fast Leja points}
\author{Sione Ma`u}

\address{Department of Mathematics,
University of Auckland,
Auckland, NZ}
\email{s.mau@auckland.ac.nz}

\begin{abstract}
Fast Leja points on an interval are points constructed using a discrete modification of the algorithm for constructing Leja points.  Not much about fast Leja points has been proven theoretically. We present an asymptotic property of a triangular interpolation array, and under the assumption that fast Leja points satisfy this property, we prove that they are  good  for Lagrange interpolation.   
\end{abstract}

\maketitle

\section{Introduction}\label{sec:intro}

  Recall that for a compact set $K\subset\CC$, Leja points are constructed inductively as follows:
\begin{enumerate}[1.]
\item Choose $z_0\in K$.
\item Given $z_0,\ldots,z_{n-1}$, choose $z_n\in K$ such that $$|p_n(z_n)|=\|p_n\|_K:=\sup_{z\in K}|p_n(z)|,$$ where $p_n(z)=(z-z_0)\cdots(z-z_{n-1}).$
\end{enumerate}

Leja points are found in step 2 by optimization, and computations become cumbersome for large $n$.  One way to get around this is to discretize $K$ with a (weakly) admissible mesh \cite{cl:uniform}, from which so-called \emph{discrete Leja points} can be constructed by numerical linear algebra \cite{bdMsv:computing}.  The theory of discrete Leja points on weakly admissible meshes has been studied a lot in higher dimensions; see \cite{mv:weakly} for more resources and a list of references.

Fast Leja points were introduced in \cite{bcr:fast}  as another discrete version of Leja points.  Unlike points constructed using weakly admissible meshes, a preliminary discretization is not required.  The construction of fast Leja points proceeds in tandem with an adaptive discretization of $K$ by so-called \emph{candidate points}.  

Fast Leja points are defined on Jordan arcs or curves in $\CC$.  The prototypical Jordan arc is the interval $[0,1]$, where the fast Leja points $T_n$ and candidate points $S_n$ are constructed inductively as follows.
\begin{enumerate}[1.]
\item Let $t_0=0$, $t_1=1$, and choose $s_1\in(0,1)$.
\item Given the sets $$T_{n-1}=\{t_0,\ldots,t_{n-1} \}, \ S_{n-1}=\{s_1,\ldots,s_{n-1}\}$$ and polynomial $p_n(z)=(z-t_0)\cdots(z-t_{n-1})$, choose $s_j\in S_{n-1}$ such that
\begin{equation*}
|p_n(s_k)|=\max\{|p_n(s_j)|\colon j=1,\ldots,n-1\}.
\end{equation*}
\item Choose $a,b\in(0,1)$ such that $[a,b]\cap\{s_1,\ldots,s_n\}=\{s_k\}$.
\item Put $t_n:=s_k$, $s_k:=a$ and $s_{n}:=b$; then $T_n=\{t_0,\ldots,t_n\}$ and $S_{n}=\{s_1,\ldots,s_n\}$.
\end{enumerate}
 Each step of the algorithm yields the fast Leja points $T_n=T_{n-1}\cup\{t_n\}$, as well as the candidate points $S_n$ that interlace the fast Leja points.  At step $n$, a point $s_k$ is moved from $S_{n-1}$ to $T_n$ and replaced by 2 points on either side.  There is some flexibility in choosing the new candidate points $a,b$ of $S_n$; a standard choice is to take the midpoints of the two intervals joining $s_k$ with the adjacent fast Leja points in $T_{n-1}$ to the left and right.

If $K$ is a Jordan arc, it has a continuous parametrization $z\colon[0,1]\to K$. Define $z_0=z(0)$, $z_1=z(1)$, $m_1=z(s_1)$ where $s_1\in(0,1)$.  The criterion (step 2) for adding  $z_n$ to $\{z_0,\ldots,z_{n-1}\}$ from the candidates $\{m_1,\ldots,m_n\}$ is $$|p_n(z_n)|=\max\{|p_n(m_j)|\colon j=1,\ldots,n-1\}.$$  Steps 3--4 are exactly the same, done in terms of the parameters $t_j$ and $s_k$ where $z_j= z(t_j)$ and $m_k=z(s_k)$.

If $K$ is a Jordan curve, it has a parametrization with $z(0)=z(1)$.  Let $z_0$ be this point and let $m_0=z(s_0)$ for some choice of $s_0\in(0,1)$.  Then run the algorithm as before.  In this case the number of fast Leja points and interlacing candidate points is the same at each step.


Although fast Leja points have been around for over 20 years, there does not seem to have been much rigorous study of their properties.  In this paper, we will consider fast Leja points on a real interval, where the candidate points are chosen to be the midpoints of the intervals between adjacent fast Leja points.

An open problem is to prove that fast Leja points are good for polynomial interpolation.  To do (Lagrange) polynomial interpolation, one specifies a  triangular interpolation array $\calC$, which is a set of points and indices $\{a_{nj}\}_{\substack{j=0,\ldots,n;\; n=1,2,\ldots}}\subset  K$, where points are distinct at each stage: $a_{nj}\neq a_{nk}$ if $j\neq k$, for each $n$. Given a function $f$, let $L_nf$ be the unique polynomial of degree $n$ such that $$f(a_{nj})=L_nf(a_{nj}) \hbox{ for each } j=0,\ldots,n.$$
 Then the array $\calC$ is good for polynomial interpolation on $K$ if $L_nf$ converges uniformly to $f$ on $K$ as $n\to\infty$ (written $L_nf\rightrightarrows f$), for all $f$ analytic in a neighborhood of $K$.  

Conditions that give good interpolation arrays for a compact set $K$ were described in \cite{bloomboschristensenlev:polynomial}; one of these conditions involves the \emph{transfinite diameter $d(K)$}  (cf. Theorem \ref{thm:bbcl}, property (2)).  The main theorem of this paper, Theorem \ref{thm:d}, connects this condition to  a certain asymptotic distribution property (Property ($\star$), see Section \ref{sec:star}) of the interpolation array.  It is easy to construct many examples of arrays that satisfy this property; indeed,  in Theorems \ref{thm:6}--\ref{thm:generalthm} some rather flexible bounds on asymptotic behaviour of the local density of points of the array are derived that guarantee Property ($\star$).  Verifying these bounds for fast Leja points is yet to be done, but I plan to return to this in a future work.

The outline of the paper is as follows.  Section \ref{sec:sup} carries out preliminary calculations relating the sup norm of a polynomial on an interval to its value at the midpoint of the interval.  In Section \ref{sec:star}, Property ($\star$) is defined and studied.   Then the main theorem is proved in Section \ref{sec:approx}, summarized below:
\begin{theorem*}[{\rm cf. Theorem \ref{thm:d}}]
Let $\fl$ denote the fast Leja points on an interval.  Suppose Property ($\star$) holds for $\fl$.  Then $\fl$ is good for polynomial interpolation.
\end{theorem*}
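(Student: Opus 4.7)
The plan is to verify the transfinite-diameter criterion (property (2) of Theorem \ref{thm:bbcl}) for the array $\fl$ on the interval and then invoke that theorem to obtain the good-interpolation conclusion. Writing $p_n(z)=\prod_{j=0}^{n-1}(z-t_j)$ and letting $I$ denote the interval, the target asymptotics is
$$\lim_{n\to\infty}\|p_n\|_I^{1/n}=d(I),$$
together with the companion Vandermonde-product rate demanded by Theorem \ref{thm:bbcl}(2).

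First I would exploit the greedy step of the construction: $t_n$ is chosen at the candidate $s_k\in S_{n-1}$ where $|p_n|$ is maximized, and the candidates $S_{n-1}$ are precisely the midpoints of the sub-intervals cut out by $T_{n-1}$. The preliminary computations of Section \ref{sec:sup} compare $\|p_n\|_J$ on a sub-interval $J\subset I$ with the midpoint value $|p_n(m_J)|$. Applied to the sub-interval containing the continuous maximum of $|p_n|$, and combined with the fact that the greedy step picks the largest midpoint value among candidates, this yields an inequality of the form $\|p_n\|_I\le C(n,J)\,|p_n(t_n)|$, where $C(n,J)$ depends only on the local geometry of $T_{n-1}$ near $J$.

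Property ($\star$), being an asymptotic distribution property of $\fl$, enters here: its job is to guarantee that $\log C(n,J)=o(n)$ uniformly in the choice of $J$, so that the midpoint-versus-sup-norm discrepancy is sub-exponential in $n$. Coupled with the classical lower bound relating the running greedy values $|p_k(t_k)|$ ($k\le n$) to the Vandermonde product $\prod_{0\le i<j\le n}|t_i-t_j|$, this yields the desired transfinite-diameter asymptotics, at which point Theorem \ref{thm:bbcl} delivers the good-interpolation conclusion directly.

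The main obstacle, as I see it, is precisely the coupling in the previous paragraph. Because the fast Leja algorithm samples only midpoints of an adaptively refined partition, the maximum sub-interval length is not \emph{a priori} controlled, and Property ($\star$) alone must supply enough geometric regularity to beat a potentially exponential blow-up of $C(n,J)$. This is where the quantitative shape of Property ($\star$) matters, and where I would expect most of the work of Section \ref{sec:approx} to sit; once that step is in hand, the transfinite-diameter condition and the resulting invocation of Theorem \ref{thm:bbcl} follow without further difficulty.
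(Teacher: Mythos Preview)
Your outline is correct and matches the paper's own argument: use Proposition~\ref{prop:3} on the subinterval containing the continuous maximum of $|p_n|$ to bound $\|p_n\|_I$ by the value at the midpoint (a candidate), then use the greedy choice $|p_n(\text{midpoint})|\le |p_n(t_n)|$, invoke Property~($\star$) to make the resulting factor subexponential, telescope via $L_n/L_{n-1}=|p_n(t_n)|$ together with the Chebyshev lower bound $\|p_n\|_I\ge\tau_{n-1}(I)^{n-1}$, and conclude condition~(2) of Theorem~\ref{thm:bbcl}. The only minor imprecision is that the relation between the greedy values and the Vandermonde product is an \emph{identity}, not merely a lower bound, and the actual target is the Vandermonde asymptotics rather than $\|p_n\|_I^{1/n}\to d(I)$ per se; but these do not affect the validity of your plan.
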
  Finally, in Section 5 we make some closing remarks.

\section{Estimating the sup norm}\label{sec:sup}

For convenience of calculation in this section,  we will translate our interval so that the point of interest is the origin.  The setup is as follows: let $n_1,n_2\in\NN$, and put  $n:=n_1+n_2+2$.  Let $\{\zeta_j\}_{j=1}^{n_1},\{\eta_j\}_{j=1}^{n_2}$ be sequences of real numbers such that
$$\zeta_1<\zeta_2<\cdots<\zeta_{n_1}<-\epsilon<\epsilon<\eta_1<\eta_2<\cdots<\eta_{n_2}.$$
We will also write the full collection of points as $\{z_j\}_{j=1}^n$, i.e.,  
$$z_1=\zeta_1, \ \ldots, \ z_{n_1+1}=-\epsilon,  \  z_{n_1+2}=\epsilon,\ z_{n_1+3}=\eta_1,\ \ldots,\ z_{n}=\eta_{n_2}.$$

\begin{lemma}\label{lem:2}
Let $\displaystyle p(z):=\prod_{j=1}^n (z-z_j)$, 
$$\begin{aligned}
p_1(z)&:=(z^2-\epsilon^2)\prod_{j=1}^{n_1}(z-\zeta_j)=:(z^2-\epsilon^2)q_1(z),\\
 p_2(z)&:=(z^2-\epsilon^2)\prod_{j=1}^{n_2}(z-\eta_j)=:(z^2-\epsilon^2)q_2(z),
\end{aligned}$$
 so that
$p=p_1q_2=q_1p_2$. 

Let $m\in[-\epsilon,\epsilon]=I$ be the point for which $|p(m)|=\|p\|_I$.  Then
\begin{equation}
|q_2(m)| < \exp\left(\epsilon\sum_j 1/|\eta_j| \right)|q_2(0)|\hbox{ and } |p_1(m)|<  \epsilon|p_1'(\epsilon)|, \hbox{  if $m\in(0,\epsilon]$,}   \label{eqn:lem2.1} 
\end{equation}
  and 
\begin{equation}
|q_1(m)| < \exp\left(\epsilon\sum_j 1/|\eta_j| \right) |q_1(0)| \hbox{ and } |p_2(m)| <\epsilon|p_2'(-\epsilon)|,  \hbox{  if $m\in[-\epsilon,0)$.}  \label{eqn:lem2.2}
\end{equation}
\end{lemma}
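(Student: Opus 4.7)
The plan is to treat only the case $m\in(0,\epsilon]$, since the case $m\in[-\epsilon,0)$ follows at once from the reflection $w=-z$: this substitution sends the configuration $\zeta_1<\cdots<\zeta_{n_1}<-\epsilon<\epsilon<\eta_1<\cdots<\eta_{n_2}$ to a configuration of the same form with the roles of the $\zeta$'s and $\eta$'s (and hence of $p_1,q_1$ and $p_2,q_2$) interchanged, and converts $m<0$ into $-m>0$; thus \eqref{eqn:lem2.2} is just \eqref{eqn:lem2.1} applied after reflection.

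For the bound on $|q_2(m)|$, every $\eta_j$ lies strictly to the right of $m>0$, so $|m-\eta_j|=\eta_j-m\le\eta_j$. Using $(\eta_j-m)/\eta_j\le 1\le e^{\epsilon/\eta_j}$ and multiplying over $j$ gives
\begin{equation*}
|q_2(m)|=\prod_{j=1}^{n_2}(\eta_j-m)\;\le\;|q_2(0)|\exp\!\left(\epsilon\sum_j 1/|\eta_j|\right),
\end{equation*}
with strict inequality whenever $m>0$ and $n_2\ge 1$. This half of \eqref{eqn:lem2.1} is essentially free.

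The substantive estimate is the one on $|p_1(m)|$. The idea is to isolate the zero of $p_1$ closest to $m$, namely $\epsilon$, by writing
\begin{equation*}
p_1(z)=(z-\epsilon)\,r(z),\qquad r(z):=(z+\epsilon)\,q_1(z)=(z+\epsilon)\prod_{j=1}^{n_1}(z-\zeta_j).
\end{equation*}
From this factorization one reads off $p_1'(\epsilon)=r(\epsilon)$. All remaining roots of $r$ lie at or to the left of $-\epsilon$, so on $[0,\epsilon]$ the polynomial $r$ is a product of positive, increasing linear factors; in particular $0<r(m)\le r(\epsilon)$. Combined with $\epsilon-m\le\epsilon$ this gives
\begin{equation*}
|p_1(m)|=(\epsilon-m)\,r(m)\;<\;\epsilon\,r(\epsilon)\;=\;\epsilon\,|p_1'(\epsilon)|,
\end{equation*}
the strict inequality coming from either $\epsilon-m<\epsilon$ (when $m>0$) or $|p_1(m)|=0$ (when $m=\epsilon$).

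The main obstacle---really the only nontrivial step---is finding the right factorization. Without isolating the boundary zero $(z-\epsilon)$ one would be led to estimate $|p_1(m)|$ via $\int_m^\epsilon |p_1'|$ and then compare $|p_1'|$ at interior points of $(m,\epsilon)$ with $|p_1'(\epsilon)|$, which is not obvious. With the factorization the comparison collapses to the monotonicity, on the positive axis, of a polynomial all of whose roots lie in $(-\infty,-\epsilon]$, which is immediate. Everything else reduces to the two elementary observations $1-x\le 1\le e^x$ for $x\ge 0$ and the symmetry $z\mapsto -z$.
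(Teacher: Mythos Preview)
Your proof is correct. For the $q$--estimate you give the same direct product bound as the paper (and you note, in fact, that $|q_2(m)|<|q_2(0)|$ already when $m>0$, so the exponential factor is not even needed). For the $p$--estimate, however, your route genuinely differs from the paper's. The paper applies the mean-value theorem to $p_2$ on $[-\epsilon,m]$ and then argues, via interlacing of the zeros of $p_2'$ and $p_2''$, that $|p_2'|$ is monotone decreasing on $[-\epsilon,m]$, so that the derivative at the intermediate point is dominated by $|p_2'(-\epsilon)|$. Your factorization $p_1(z)=(z-\epsilon)r(z)$ with $r(z)=(z+\epsilon)q_1(z)$ replaces all of this by the bare observation that a real polynomial whose roots all lie in $(-\infty,-\epsilon]$ is positive and increasing on $[0,\epsilon]$, together with $p_1'(\epsilon)=r(\epsilon)$. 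This is shorter, avoids the interlacing argument, and---worth pointing out---never uses the hypothesis that $m$ is where $|p|$ attains its maximum on $I$: your inequalities hold for \emph{every} $m\in(0,\epsilon]$. The paper's argument, by contrast, invokes ``$p_2'(m)=0$'', which would follow if $m$ maximized $|p_2|$ rather than $|p|$; your approach sidesteps that issue entirely.
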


\begin{proof}
We prove \eqref{eqn:lem2.2};  the proof of \eqref{eqn:lem2.1} is similar.

Suppose $m\in[-\epsilon,0)$.  Since $m<0<\eta_j$ for all $j$, 
$$\begin{aligned}
\frac{|q_1(m)|}{|q_1(0)|}= \frac{\prod_j(|m|+|\eta_j|)}{\prod_j |\eta_j|} &= \prod_j \left(1+\frac{|m|}{|\eta_j|}\right)  \\  &\leq \exp\left(\sum_j |m|/|\eta_j| \right)
\leq\exp\left(\sum_j \epsilon/|\eta_j| \right).
\end{aligned}$$  

Next, by the mean-value theorem, $|p_2(m)|=|p_2'(c)|\cdot|m|$ for some $c\in(-\epsilon,m)$.  By elementary calculus, $|p_2'(z)|\leq |p_2'(-\epsilon)|$ on $[-\epsilon,m]$ if $|p_2'|$ is decreasing to zero on $[-\epsilon,m]$.   

We claim that, indeed, $|p_2'|$ is decreasing to zero on $[-\epsilon,m]$.  Suppose not.  Then $p_2'$ will have a critical point $a\in[-\epsilon,m]$.  However, $p_2'(m)=0$ and  the zeros of $p_2'$ must interlace the zeros of $p_2$.  Therefore, apart from $m$, all other zeros of $p_2'$ lie outside $[-\epsilon,\epsilon]$ and to the right of this interval.   On the other hand, $a<m$ and $p_2''(a)=0$, which implies that $a$ lies between two zeros of $p_2'$.  In particular, there is a zero of $p_2'$ less than $m$, a contradiction.

So the claim holds and $|p_2(m)|<|p_2'(c)|\cdot|m|<\epsilon|p_2'(-\epsilon)|.$  This completes the proof of \eqref{eqn:lem2.2}.
\end{proof}

\begin{proposition} \label{prop:3}
Let  $I=[-\epsilon.\epsilon]$.  Then 
$$
\frac{\|p\|_{I}}{|p(0)|} \leq 2\exp\left( \epsilon\sum_{j=1}^n \frac{1}{|\zeta_j|}  \right).
$$
\end{proposition}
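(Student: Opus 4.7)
The plan is straightforward: reduce to a one-sided case by symmetry, apply the three bounds of Lemma~\ref{lem:2} to the factorization $p = p_1 q_2$, and finish with one additional elementary product estimate analogous to the one already used in Lemma~\ref{lem:2}.

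Let $m \in I$ realize $\|p\|_I = |p(m)|$. If $m = 0$ the estimate is trivial (ratio is $1$, bounded by $2$). By the left-right symmetry of the setup, it suffices to handle $m \in (0,\epsilon]$ (the case $m \in [-\epsilon,0)$ is identical with $p = q_1 p_2$ in place of $p = p_1 q_2$, and $-\epsilon$ in place of $\epsilon$). Writing
$$\frac{|p(m)|}{|p(0)|} = \frac{|p_1(m)|}{|p_1(0)|}\cdot\frac{|q_2(m)|}{|q_2(0)|},$$
Lemma~\ref{lem:2} already controls the second factor by $\exp(\epsilon\sum_j 1/|\eta_j|)$.

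For the first factor, I would exploit the fact that $p_1(z) = (z^2-\epsilon^2) q_1(z)$ has $\pm\epsilon$ as simple roots: direct computation gives $p_1(0) = -\epsilon^2 q_1(0)$ and $p_1'(\epsilon) = 2\epsilon\, q_1(\epsilon)$. Combined with the second bound $|p_1(m)| \leq \epsilon|p_1'(\epsilon)|$ from Lemma~\ref{lem:2}, this yields
$$\frac{|p_1(m)|}{|p_1(0)|} \leq \frac{\epsilon \cdot 2\epsilon |q_1(\epsilon)|}{\epsilon^2 |q_1(0)|} = 2\,\frac{|q_1(\epsilon)|}{|q_1(0)|},$$
which is where the leading constant $2$ in the proposition originates. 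The ratio $|q_1(\epsilon)|/|q_1(0)|$ is then handled by exactly the same elementary product estimate used in the proof of Lemma~\ref{lem:2}: since each $\zeta_j < -\epsilon$,
$$\frac{|q_1(\epsilon)|}{|q_1(0)|} = \prod_{j=1}^{n_1}\left(1 + \frac{\epsilon}{|\zeta_j|}\right) \leq \exp\!\left(\epsilon \sum_{j=1}^{n_1}\frac{1}{|\zeta_j|}\right).$$
Multiplying the three bounds gives $\|p\|_I/|p(0)| \leq 2\exp\!\bigl(\epsilon\sum_j 1/|\zeta_j| + \epsilon\sum_j 1/|\eta_j|\bigr)$, and since the two roots $\pm\epsilon$ contribute the non-negative quantity $2/\epsilon$ to the full sum $\sum_{j=1}^n 1/|z_j|$, the stated bound follows (interpreting the index $\zeta_j$ up to $n$ in the statement as the full collection of zeros).

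There is no serious obstacle: the proof is essentially a one-page assembly of the three ingredients of Lemma~\ref{lem:2} with one additional one-line estimate. The only points requiring care are the arithmetic bookkeeping that produces the factor of $2$ (from $p_1'(\epsilon)=2\epsilon\, q_1(\epsilon)$) and making sure that the direction of the exponential estimate $\prod(1+x_j)\leq \exp(\sum x_j)$ is used on the correct factor — namely on $q_1$ evaluated at $\epsilon$, not on $q_2$ evaluated at $m$ (where the product is actually bounded by $1$).
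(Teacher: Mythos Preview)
Your proof is correct and follows essentially the same route as the paper: split $p=p_1q_2$, apply the two bounds of Lemma~\ref{lem:2} for $m\in(0,\epsilon]$, compute $|p_1'(\epsilon)|=2\epsilon\,|q_1(\epsilon)|$ and $|p_1(0)|=\epsilon^2|q_1(0)|$, and finish with the product estimate $\prod(1+\epsilon/|\zeta_j|)\le\exp(\epsilon\sum 1/|\zeta_j|)$. The only difference is cosmetic---the paper writes the products out explicitly rather than via $q_1$---and your parenthetical observation that $|q_2(m)|/|q_2(0)|\le 1$ is in fact sharper than what either proof actually uses.
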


\begin{proof} 
Let $m\in [-\epsilon,\epsilon]$ satisfy $|p(m)|=\|p\|_I$. When $m=0$ the inequality is trivial.  

Suppose $m\in(0,\epsilon]$.  Then by Lemma \ref{lem:2}, equation \eqref{eqn:lem2.1}, 
$$
\frac{|p(m)|}{|p(0)|} = \frac{|p_1(m)q_2(m)|}{|p_1(0)q_2(0)|}\leq \exp\left(\epsilon\sum_j1/|\eta_j|\right)\epsilon\frac{|p_1'(\epsilon)|}{|p_1(0)|} .
$$
Now $|\zeta_j-\epsilon|= |\zeta_j|+\epsilon$ since $\zeta_j<0<\epsilon$, so  $|p_1'(\epsilon)| = 2\epsilon\prod_{j=1}^{n_1}(|\epsilon|+|\zeta_j|)$.  Using this together with $|p_1(0)|=\epsilon^2\prod_{j=1}^{n_1}|\zeta_j|$, we have  
$$\begin{aligned}
\frac{|p(m)|}{|p(0)|} \leq \exp\left(\epsilon\sum_{j=1}^{n_2} 1/|\eta_j|\right) 2\prod_{j=1}^{n_1}\left(1+\frac{|\epsilon|}{|\zeta_j|}\right) 
& \leq  2\exp\left(\epsilon\sum_{j=1}^{n_2} 1/|\eta_j|\right)\exp\left(\epsilon\sum_{j=1}^{n_1}  1/|\zeta_j|\right). \\
&\leq 2\exp\left(\epsilon\sum_{j=1}^n \frac{1}{|\zeta_j|}\right).
\end{aligned}$$
When $m\in(-\epsilon,0]$ we get the same estimate by the same argument, this time using  \eqref{eqn:lem2.2}.
\end{proof}

\section{An asymptotic property}\label{sec:star}

Consider a triangular array $\calC = \coprod_{n=1}^{\infty}\calC_n$, where $\calC_n=\{\zeta_{nj}\}_{j=1}^n \subset\RR$ are the points of the arrray at stage $n$, and assume the points of $\calC_n$ are distinct and listed in increasing order: $\zeta_{nj}<\zeta_{nk}$ if $j<k$. Let
$$
m_{nj}:= \tfrac{1}{2}(\zeta_{nj}+\zeta_{n(j+1)}) \quad  (j\in\{1,\ldots,n-1\})
$$
be the midpoint of the interval joining adjacent points.  Define
$$
s_n(j) := |m_{nj} -\zeta_{nj}|, \quad  
H_n(j) := \left(\frac{1}{n}\sum_{k=1}^n \frac{1}{|m_{nj} -\zeta_{nk}|} \right)^{-1}. 
$$
We are interested in whether or not  $\calC$ has the following asymptotic property:
$$
  \lim_{n\to\infty}\max\left\{ \frac{s_n(j)}{H_n(j)}\colon  j=1,\ldots,n-1\right\}  =   0 . \eqno(\star)
$$

A sequence $\{\zeta_j\}_{j=1}^{\infty}$ is also defined to have Property ($\star$) if its corresponding triangular array has Property ($\star)$.  Here, the $n$-th stage of the corresponding array is constructed by listing the first $n$ points of the sequence in increasing order.

\begin{example}\rm 
We illustrate Property ($\star$) for a simple sequence that converges to the uniform distribution on $[0,1]$.    
Let $$\calC=\{\frac{m}{2^{k}}\colon m,k\in\NN, m\leq 2^k\},$$ be the fractions in the interval $[0,1]$ whose denominator is a power of 2, and suppose we list the elements of $\calC$ sequentially by the size of the denominator, then numerator, referred to simplest form, i.e., 
$$
0,1,\tfrac{1}{2},\tfrac{1}{4},\tfrac{3}{4},\tfrac{1}{8},\tfrac{3}{8},\tfrac{5}{8},\tfrac{7}{8},\ldots
$$
To give a specific calculation: when $n=9$ and $j=7$ we have $\zeta_{9,7}=\dfrac{3}{4}$, $m_{9,7}=\dfrac{13}{16}$,  $$s_9(7)=\frac{1}{16},  \hbox{ and } \frac{1}{H_9(7)}=\frac{16}{9}\left(\frac{1}{13}+\frac{1}{11}+\frac{1}{9}+\frac{1}{7} +\frac{1}{5}+\frac{1}{3}+1+1+\frac{1}{3}\right)=\frac{2370064}{405405}.$$
For general $n$, suppose  $n\in[2^k+1,2^{k+1}]$ for some $k$.  Given $j<n$, either $s_n(j)=2^{-k}$ or $s_n(j)=2^{-(k+1)}$. In either case, $s_n=O(\frac{1}{n})$.  We also have $\frac{1}{H_n(j)} = O({\log(n)})$,  since these finite sums can be estimated by a multiple of the first $n$ terms of the harmonic series.  Altogether, 
$$\frac{s_n(j)}{H_n(j)} = O(\frac{\log(n)}{n}) \longrightarrow 0 \hbox{ as } n\to\infty.$$ 
The big-O estimates can be made independent of $j$, so Property $(\star)$ holds for $\calC$.
\end{example}

Basic estimates as above can be done very generally to yield Property ($\star$).

\begin{theorem}\label{thm:6}
Let  $\{b_{nj}\}$ be a triangular array of points in $[0,1]$.  Suppose there exist positive constants $\alpha_1,\alpha_2,B_1,B_2$ with the following properties:
\begin{enumerate}
\item $0<\alpha_2\leq\alpha_1<1+\alpha_2$.

\item  There exists $n_0\in\NN$ such that for every integer $n\geq n_0$, 
\begin{equation}\label{eqn:B1B2}
 B_1\Bigl|\frac{j-k}{n}\Bigr|^{\alpha_1} \leq |b_{nj}-b_{nk}| \leq B_2\Bigl|\frac{j-k}{n}\Bigr|^{\alpha_2},  \hbox{ whenever }  j,k\in\{1,\ldots,n\}.
\end{equation} \end{enumerate}

Then Property ($\star$) holds for $\{b_{nj}\}$.
\end{theorem}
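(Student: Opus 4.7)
The plan is to prove Property $(\star)$ by separately bounding $s_n(j)$ from above and $H_n(j)$ from below, uniformly in $j$, and then combining. The upper bound in \eqref{eqn:B1B2} applied to consecutive indices $k=j,\,j+1$ immediately gives
$$
 s_n(j) \;=\; \tfrac{1}{2}\,|b_{n(j+1)}-b_{nj}| \;\leq\; \tfrac{B_2}{2}\,n^{-\alpha_2},
$$
which is uniform in $j$.

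For $H_n(j)$, the idea is to estimate $1/H_n(j)=\tfrac{1}{n}\sum_k |m_{nj}-b_{nk}|^{-1}$ by splitting the sum into the two terms $k\in\{j,j+1\}$, for which $|m_{nj}-b_{nk}|=s_n(j)\geq \tfrac{B_1}{2}n^{-\alpha_1}$ by the lower bound in \eqref{eqn:B1B2} applied to consecutive points, and the remaining terms. Since $m_{nj}$ lies between $b_{nj}$ and $b_{n(j+1)}$, for $k\leq j-1$ I have $|m_{nj}-b_{nk}|\geq |b_{nj}-b_{nk}|\geq B_1\bigl(\tfrac{j-k}{n}\bigr)^{\alpha_1}$, and symmetrically $|m_{nj}-b_{nk}|\geq B_1\bigl(\tfrac{k-j-1}{n}\bigr)^{\alpha_1}$ for $k\geq j+2$. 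Summing by re-indexing $l=j-k$ and $l=k-j-1$ yields a $j$-independent bound
$$
 \frac{1}{H_n(j)} \;\leq\; \frac{4}{B_1}\,n^{\alpha_1-1} \;+\; \frac{2\,n^{\alpha_1-1}}{B_1}\sum_{l=1}^{n-1} l^{-\alpha_1}.
$$

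To finish I would case-split on the size of $\alpha_1$, since this controls the growth of $\sum_{l=1}^{n-1}l^{-\alpha_1}$. If $\alpha_1<1$, the sum is $O(n^{1-\alpha_1})$, so $1/H_n(j)=O(1)$ and $s_n(j)/H_n(j)=O(n^{-\alpha_2})\to 0$. If $\alpha_1=1$, the sum is $O(\log n)$, so $s_n(j)/H_n(j)=O(n^{-\alpha_2}\log n)\to 0$ using $\alpha_2>0$. If $\alpha_1>1$, the sum is $O(1)$, so $1/H_n(j)=O(n^{\alpha_1-1})$ and $s_n(j)/H_n(j)=O(n^{\alpha_1-1-\alpha_2})\to 0$ precisely because of the hypothesis $\alpha_1<1+\alpha_2$. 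In each case all estimates are uniform in $j\in\{1,\ldots,n-1\}$, which is exactly $(\star)$.

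The main obstacle I anticipate is just bookkeeping: making sure the constants and the re-indexing in the split sum genuinely do not depend on $j$, and checking that the case $\alpha_1>1$ (the only case in which the upper condition $\alpha_1<1+\alpha_2$ is actually used rather than just $\alpha_2>0$) is handled by the right arithmetic comparison. Conceptually nothing sharp is needed: the lower bound in \eqref{eqn:B1B2} prevents the points from clumping, so $H_n(j)$ cannot be too small, while the upper bound forces $s_n(j)$ to shrink polynomially in $n$, and the gap $1+\alpha_2-\alpha_1>0$ provides the wiggle room that drives the ratio to zero.
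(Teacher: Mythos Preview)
Your proof is correct and follows essentially the same line as the paper's: bound $s_n(j)$ above via the upper inequality in \eqref{eqn:B1B2}, bound $1/H_n(j)$ above via the lower inequality by splitting off the nearest terms and comparing the remainder to $\sum_l l^{-\alpha_1}$, then case-split on $\alpha_1\lessgtr 1$. Your version is in fact cleaner than the paper's, which wraps the same uniform-in-$j$ estimates in an unnecessary contradiction/subsequence argument.
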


\begin{proof}
For each $n\in\NN$ define
$$
j_n:=\text{Arg}\max_{j\in\{1,\ldots,n\}  } \frac{s_n(j)}{H_n(j)} .
$$
To verify Property ($\star$) it suffices to show that ${s_n(j_n)}/{H_n(j_n)}\to 0$ as $n\to\infty$.  For the purpose of contradiction, suppose not. 
Then $\displaystyle\limsup_{n\to\infty} \frac{s_n(j_n)}{H_n(j_n)}>0$, hence there exists $\delta>0$ and a subsequence $\{n_l\}_{l=1}^{\infty}\subset\NN$ such that
\begin{equation}\label{eqn:sn/Hn}
\frac{s_{n_l}(j_{n_l})}{ H_{n_l}(j_{n_l})} \to\delta    \hbox{ as } l\to \infty. 
\end{equation}
Now 
$$
\begin{aligned}
\frac{1}{H_{n_l}(j_{n_l})} \ = \ \frac{1}{n_l}\sum_{k=1}^{n_l} \frac{1}{|b_{n_lk}-m_{n_lj_{n_l}}|}   
\ &= \  \frac{1}{n_l}\sum_{k=1}^{j_{n_l}} \frac{1}{m_{n_lj_{n_l}} - b_{n_lk}}  \ + \    \frac{1}{n_l}\sum_{k=j_{n_l}+1}^{n_l} \frac{1}{ b_{n_lk} - m_{n_lj_{n_l}} } \\ 
& =: \  \frac{1}{H_{n_l}^{(1)}} \  + \   \frac{1}{H_{n_l}^{(2)}}.
\end{aligned}$$
We concentrate on the second term.  First,
$$\begin{aligned}
\frac{1}{H_{n_l}^{(2)}} = \frac{1}{n_l}\sum_{k=j_{n_l}+1}^{n_l} \frac{1}{ b_{n_lk} - m_{n_lj_{n_l}} } 
& \ \leq \   \frac{1}{n_ls_{n_l}(j_{n_l})}  \ +  \   \frac{1}{n_l}\sum_{k=j_{n_l}+2}^{n_l} \frac{2}{ b_{n_lk} - b_{n_lj_{n_l}} }  \\
&  \  \leq \   \frac{1}{n_ls_{n_l}(j_{n_l})}  \ +  \ 
\frac{2}{B_1}\sum_{k=j_{n_l}+2}^{n_l} \left(\frac{1}{ (k- j)/n_l }\right)^{\alpha_1} \frac{1}{n_l} \\
& \ \leq \     \frac{1}{n_ls_{n_l}(j_{n_l})}\  + \  \frac{2}{B_1} \int_{\frac{1}{n_l}}^{1-\frac{j_{n_l}}{n_l}} \frac{dx}{x^{\alpha_1}} \\
&\leq   \frac{1}{n_ls_{n_l}(j_{n_l})}  \ + \  \frac{2}{B_1}  \int_{\frac{1}{n_l}}^{1} \frac{dx}{x^{\alpha_1}}
\end{aligned}$$
where we use the lower estimate of equation \eqref{eqn:B1B2} in the second line of the calculation of $1/H_{n_l}^{(2)}$.

 We will consider different cases of $\alpha_1$.  
When $\alpha_1<1$, we estimate 
$$
\frac{1}{H_{n_l}^{(2)}}   \leq   \frac{1}{n_ls_{n_l}(j_{n_l})}    + \frac{2}{B_1} \int_0^1 \frac{dx}{x^{\alpha_1}}  \leq \frac{1}{n_ls_{n_l}(j_{n_l})} + C
$$
where $C$ is a  positive constant. Also, $s_{n_l}(j_{n_l})< B_2n_l^{-\alpha_2}$ from the definition of $s_{n_l}(j_{n_l})$ and the upper estimate in \eqref{eqn:B1B2}.
   Hence 
$$
\frac{s_{n_l(j_{n_l})}}{H_{n_l}^{(2)}}\leq \frac{1}{n_l} + CB_2n_l^{-\alpha_2}    \to 0    \hbox{ as } l\to\infty.
$$

When $\alpha_1=1$, we use the following estimate:
$$
\frac{1}{H_{n_l}^{(2)}}  \ \leq \   \frac{1}{n_ls_{n_l}(j_{n_l})}   \  + \   \frac{2}{B_1} \int_{\tfrac{1}{n_l}}^1 \frac{dx}{x} 
 \ = \    \frac{1}{n_ls_{n_l}(j_{n_l})} \  +  \   \frac{2}{B_1 }\log(n_l)  
$$ and so $$\frac{s_{n_l(j_{n_l})}}{H_{n_l}^{(2)}} \leq \dfrac{1}{n_l} + C\log(n_l)n_l^{-\alpha_2}\to 0 \hbox{ as } l\to\infty.$$

When $\alpha_1>1$, 
$$\frac{1}{H_{n_l}^{(2)}} \leq  \frac{1}{n_ls_{n_l}(j_{n_l})} +  \frac{2}{B_1}\int_{\frac{1}{n_l}}^1 \frac{dx}{x^{\alpha_1}} 
= \frac{1}{n_ls_{n_l}(j_{n_l})} + \frac{2}{B_1(\alpha_1-1)}(n_l^{\alpha_1-1}-1)
$$ from which it follows that 
$$
\frac{s_{n_l(j_{n_l})}}{H_{n_l}^{(2)}}  \leq \dfrac{1}{n_l} +  \frac{2}{B_1(\alpha_1-1)}n_l^{\alpha_1-1-\alpha_2}\to 0 \hbox{ as } l\to\infty.
$$

Hence in all cases, $\dfrac{s_{n_l(j_{n_l})}}{H_{n_l}^{(2)}}\to 0$.  Similar methods may be used to prove $\dfrac{s_{n_l(j_{n_l})}}{H_{n_l}^{(1)}}\to 0$ also.  Altogether 
$$
\frac{s_{n_l}(j_{n_l})}{ H_{n_l}(j_{n_l})} \to 0 \hbox{ as }l\to\infty,
$$
contradicting \eqref{eqn:sn/Hn}.  So Property $(\star)$ holds.
\end{proof}

The hypotheses can be slightly weakened, to hold for ``almost all points'' in the array.

\begin{theorem}\label{thm:generalthm}
Let $\{b_{nj}\}$ be a triangular array of points in $[0,1]$.  Suppose, for any $\epsilon>0$, there exists $J\subset[0,1]$ given by a finite union of closed intervals of total length less than $\epsilon$, and positive constants $\alpha_1,\alpha_2,B_1,B_2$, such that 
\begin{enumerate}
\item $0<\alpha_2\leq \alpha_1<1+\alpha_2$.
\item There exists $n_0$ such that for every integer $n\geq n_0$ and $j\in\{1,\ldots,n\}$ such that $j/n\not\in J$, we have
$$
 B_1\Bigl|\frac{j-k}{n}\Bigr|^{\alpha_1} \leq |b_{nj}-b_{nk}| \leq B_2\Bigl|\frac{j-k}{n}\Bigr|^{\alpha_2}  \hbox{for all  } k\in\{1,\ldots,n\}. $$
 \end{enumerate}
Then Property ($\star$) holds for $\{b_{nj}\}$.  
\end{theorem}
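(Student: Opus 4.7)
The plan is to adapt the proof of Theorem \ref{thm:6} by partitioning indices at stage $n$ as $G_n := \{j : j/n \notin J\}$ (``good'') and $B_n := \{j : j/n \in J\}$ (``bad''), and estimating $s_n(j)/H_n(j)$ separately on each piece. I would fix $\delta>0$, choose $\epsilon>0$ small (to be specified in terms of $\delta$), and invoke the hypothesis to obtain $J$ together with constants $\alpha_1,\alpha_2,B_1,B_2$.

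For $j \in G_n$, the hypothesis provides the full two-sided bounds on $|b_{nj}-b_{nk}|$ for every $k$, so the argument of Theorem \ref{thm:6} applies verbatim, yielding $\max_{j \in G_n} s_n(j)/H_n(j) \to 0$ as $n \to \infty$.

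For $j \in B_n$, I would let $I$ be the component of $J$ containing $j/n$, and let $j^-,j^+ \in G_n$ be the good indices bracketing the maximal run of bad indices around $j$. Applying the upper bound to the pair $(j^+,j^-)$, valid because $j^+ \in G_n$, gives $|b_{nj^+}-b_{nj^-}| \leq B_2(|I|+2/n)^{\alpha_2}$, hence $s_n(j) \leq \tfrac{1}{2}B_2\epsilon^{\alpha_2}(1+o(1))$. To bound $1/H_n(j)$ I would split the sum by whether $k/n \in I$ or not. The in-component sum uses the interlacing estimate $|m_{nj}-b_{nk}| \geq s_n(j)$ on at most $n|I|+O(1)$ terms, contributing at most $\epsilon + O(1/n)$ to $s_n(j)/H_n(j)$. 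For the out-of-component sum, I would first establish the lower bound $|b_{nj}-b_{nk}| \geq cB_1(|k-j|/n)^{\alpha_1}$ with $c=c(\alpha_1)>0$: directly from the hypothesis when $k \in G_n$, and by ``pivoting'' through a good index between $j$ and $k$ when $k \in B_n$ lies in a different component of $J$ (using superadditivity or convexity of $x^{\alpha_1}$ as appropriate). Then decompose this sum at the threshold $|k-j| \sim n(s_n(j)/B_1)^{1/\alpha_1}$: use the trivial bound $1/s_n(j)$ on nearby terms and the hypothesis bound on far terms. A Riemann-sum estimate analogous to Theorem \ref{thm:6} produces a contribution of order $\epsilon^{\alpha_2/\max(1,\alpha_1)}$, with an extra $\log(1/\epsilon)$ factor in the borderline case $\alpha_1=1$.

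Combining, $\max_{j \in B_n} s_n(j)/H_n(j)$ can be made arbitrarily small by choosing $\epsilon$ small, uniformly in $n$ for $n$ large; with the $G_n$ estimate this establishes Property $(\star)$. The main obstacle is the out-of-component estimate when $\alpha_1 \geq 1$: since $s_n(j)$ for $j \in B_n$ is only $O(\epsilon^{\alpha_2})$ rather than $O(n^{-\alpha_2})$, naively reusing the Theorem \ref{thm:6} bound produces a $\log n$ blowup. The remedy is the distance-dependent partition described above, which prevents small denominators from dominating and leaves a far-term sum controlled by a power of $\epsilon$ independent of $n$.
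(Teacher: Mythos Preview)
Your approach is correct and shares the paper's overall strategy, though with different packaging. The paper argues by contradiction: it extracts a subsequence with $s_{n_l}(j_{n_l})/H_{n_l}(j_{n_l}) \to \delta > 0$ and $j_{n_l}/n_l \to x$, chooses $J$ corresponding to $\epsilon = \delta/2$, and splits into the cases $x \notin J$ (where Theorem~\ref{thm:6} applies directly) and $x \in J$. In the latter case the paper splits $1/H_{n_l}$ according to whether $k/n_l$ lies in \emph{all} of $J$, not just the component $I$ containing $x$; this is simpler than your split, because every $k$ in the complementary piece $S_2$ is automatically good, so no pivoting through intermediate good indices is needed for bad $k$ in other components. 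The in-$J$ piece $S_1$ is bounded exactly as your in-component sum (at most $n_l\epsilon$ terms, each $\leq 1/s_{n_l}$, giving $s_{n_l}S_1 \leq \epsilon$). For $S_2$ the paper pivots once through the first good index $r_l$ past $x$ and then simply invokes the integral estimates of Theorem~\ref{thm:6} to assert $s_{n_l}T_i \to 0$. Your distance-dependent threshold split at $|k-j| \sim n(s_n(j)/B_1)^{1/\alpha_1}$ is not in the paper; it makes fully explicit the step that the paper covers with the phrase ``using the same estimates of the integral as before,'' and your observation that $s_n(j)$ for bad $j$ is only $O(\epsilon^{\alpha_2})$ rather than $O(n^{-\alpha_2})$ is precisely the subtlety at that point. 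In short: the paper's $J$-versus-complement decomposition is cleaner and would let you drop the pivoting for out-of-component bad $k$, while your threshold argument is more careful at the one delicate estimate.
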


\begin{proof}
Let the quantities $\{j_n\}_{n\in\NN}$, $\delta>0$ and the subsequence $\{n_l\}_{l\in1}^{\infty}$ be as in the first paragraph of the proof of Theorem \ref{thm:6}, with equation \eqref{eqn:sn/Hn} holding.  By possibly passing to a further subsequence we may assume that there exists $x\in[0,1]$ such that  $j_{n_l}/n_l\to x$.

Choose $J$ corresponding to $\epsilon=\delta/2$.  
If $x\not\in J$ then for sufficiently large $l$ we have $j_{n_l}/n_l\not\in J$, and the rest of the proof  goes through with no change to yield
 $$\frac{s_{n_l}(j_{n_l})}{H_{n_l}(j_{n_l})}\to 0 \hbox{ as } l\to\infty,$$ contradicting \eqref{eqn:sn/Hn}.

To get a contradiction in case $x\in J$, then without loss of generality (passing to a subsequence) we may suppose $j_{n_l}/n_l\in J$ for all $l$, and write 
$$\begin{aligned}
\frac{1}{H_{n_l}} = \frac{1}{n_l}\sum_{k=1}^{n_l}\frac{1}{|b_{n_lk}-m_{n_lj_{n_l}}|} & \ = \  \frac{1}{n_l}\sum_{\frac{k}{n_l}\in J} \frac{1}{|b_{n_lk}-m_{n_lj_{n_l}}|} \ + \  \frac{1}{n_l}\sum_{\frac{k}{n_l}\not\in J} \frac{1}{|b_{n_lk}-m_{n_lj_{n_l}}|} \\
&\ =:\  S_1 \ + \ S_2.
\end{aligned}$$
Using the length of $J$, we see that the sum $S_1$ consists of  $n_l\epsilon$ terms or fewer.  Also, $s_{n_l}(j_{n_l})\leq |b_{n_lk}-m_{n_lj_{n_l}}|$ for all $k$, so that 
$$S_1\leq \frac{\epsilon}{s_{n_l}(j_{n_l})}.$$  
To estimate $S_2$, we split the sum into two pieces:
$$
S_2 \ = \  \frac{1}{n_l}\sum_{\frac{k}{n_l}\not\in J, \frac{k}{n_l}>x } \frac{1}{|b_{n_lk}-m_{n_lj_{n_l}}|} \ + \   \frac{1}{n_l}\sum_{\frac{k}{n_l}\not\in J, \frac{k}{n_l}<x } \frac{1}{|b_{n_lk}-m_{n_lj_{n_l}}|}  \ =: \  T_1  +   T_2.
$$
To estimate $T_1$, we let $r_l$ be the smallest integer such that $r_l/n_l\not\in J$ and   $k\geq r_l$ for all $k\in\{1,\ldots,n_l\}$ such that $k/n_l>x$.  Then $r_l>j_{n_l}$ and so 
$$\begin{aligned}
T_1  \ = \   \frac{1}{n_l}\sum_{\frac{k}{n_l}\not\in J, \frac{k}{n_l}>x } \frac{1}{|b_{n_lk}-m_{n_lj_{n_l}}|} 
 \ & \leq \  \frac{1}{n_l}\sum_{\frac{k}{n_l}\not\in J, \frac{k}{n_l}\geq\frac{r_l}{n_l}  } \frac{1}{|b_{n_lk}-m_{n_lj_{n_l}}|} \\
&\leq\ \frac{1}{n_l}\left(\frac{2}{s_{n_l}(j_{n_l})} +   \sum_{\frac{k}{n_l}\not\in J, \frac{k}{n_l}>\frac{r_l+1}{n_l}  } \frac{1}{|b_{n_lk}-m_{n_lj_{n_l}}|} \right) \\
&\leq \ \frac{1}{n_l}\left(\frac{2}{s_{n_l}(j_{n_l})} +   \sum_{k=r_l+2 }^{n_l} \frac{2}{|b_{n_lk}-b_{n_lr_l}|} \right) \\
&\leq  \  \frac{1}{n_l}\left(\frac{2}{s_{n_l}(j_{n_l})} \  + \  \frac{2}{B_1}\int_{\frac{1}{n_l}}^1 \frac{dx}{x^{\alpha_1}}         \right) 
\end{aligned}$$
following the proof of Theorem \ref{thm:6}.  Using the same estimates of the integral as before, we obtain $s_{n_l}(j_{n_l})T_1\to 0$.  

A similar argument can be used to estimate $T_2$, to yield $s_{n_l}(j_{n_l})T_2\to 0$.  Hence $s_{n_l}(j_{n_l})S_2\to 0$ as $l\to\infty$. 

 Finally,
$$
\limsup_{l\to\infty}\frac{s_{n_l}(j_{n_l})}{H_{n_l}(j_{n_l})} \  = \  \limsup_{l\to\infty}\bigl(s_{n_l}(j_{n_l})S_1 +  s_{n_l}(j_{n_l})S_2\bigr)  \ \leq \  \epsilon + 0 = \delta/2 <\delta,
$$
contradicting \eqref{eqn:sn/Hn}.  
\end{proof}

\begin{figure}
\noindent\includegraphics[height=10cm]{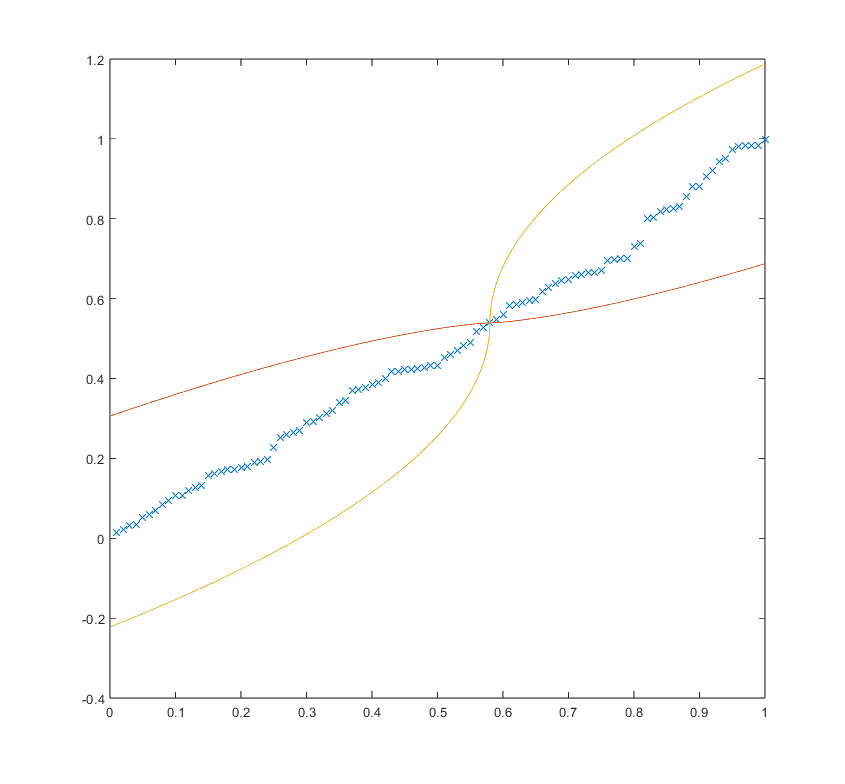}
\caption{Illustration of the bounds (valid for all $k$)   \\[10pt] 
\centerline{$B_1|(j-k)/n|^{\alpha_1} \leq |b_{nj}-b_{nk}| \leq B_2|(j-k)/n|^{\alpha_2}$ }\\[10pt]
when $j=58$, for a randomly generated distribution of 100 points in the interval $[0,1]$. 
Here $0.5<\alpha_2<1<\alpha_1<1.5$. 
 }
\end{figure}

\begin{figure}
\includegraphics[height=10cm]{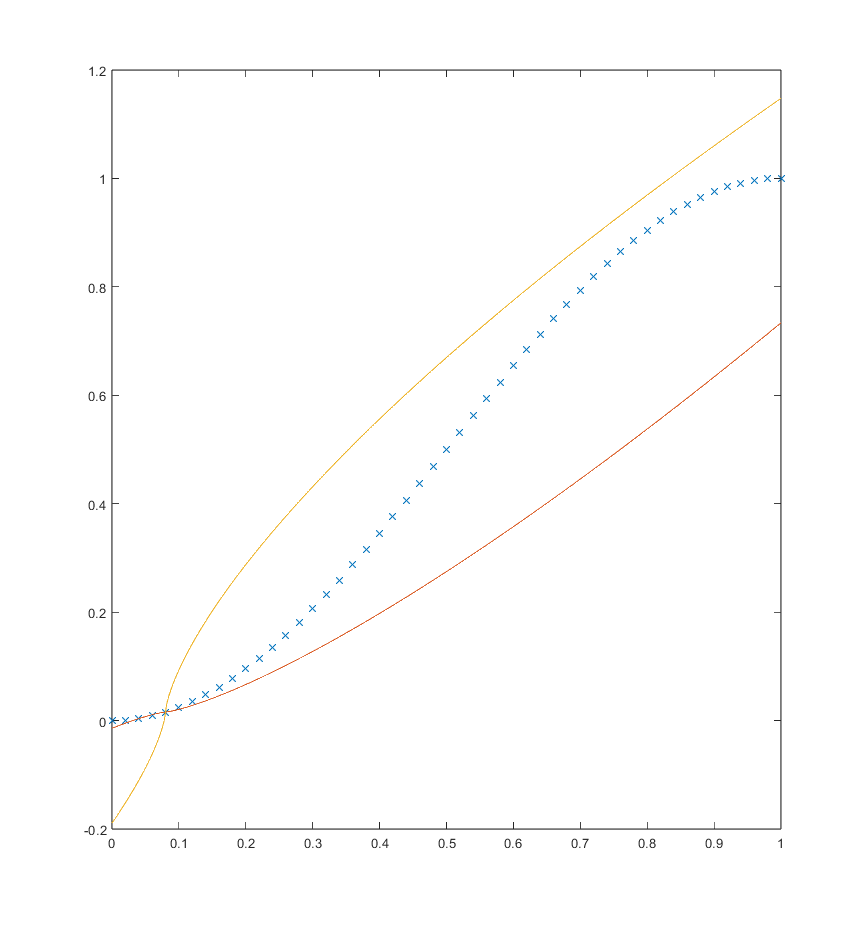}
\caption{Illustration of the estimates in Theorem \ref{thm:generalthm} for a 50-point approximation to the equilibrium distribution for $[0,1]$ with  $j=5$, and $0.5<\alpha_2<1<\alpha_1<1.5$. The lower estimate only holds outside a (small) neighborhood of the end points.   \\[10pt] 
  \\   
 }

\end{figure}

The conditions of the theorem are true for many arrays; a couple of illustrations of the estimates are given in Figures 1 and 2.  This includes arrays in which points converge to a reasonable   distribution over the interval.  In what follows, $\delta_{b_{nj}}$ denotes the discrete (Dirac) probability measure supported at $b_{nj}$ and $dx$ denotes Lebesgue measure.

\begin{corollary}
Let $\{b_{nj}\}$ be a triangular array of points in $[0,1]$, and suppose $$\frac{1}{n+1}\sum_{j=0}^n\delta_{b_{nj}} \longrightarrow \varphi'(x)dx \hbox{ as }n\to\infty$$ for some $C^1$  function $\varphi$ on $[0,1]$ with 
$\varphi'\geq 0$, where `$\longrightarrow$' denotes weak-$^*$ convergence of measures.  Suppose also that $\{x\in[0,1]\colon \varphi'(x) = 0\}$ is finite.  Then $\{b_{nj}\}$ has property ($\star$).
\end{corollary}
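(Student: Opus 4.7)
The plan is to prove Property $(\star)$ directly from the weak-$^*$ convergence, rather than by extracting constants $\alpha_1,\alpha_2,B_1,B_2$ for Theorem \ref{thm:generalthm}: weak-$^*$ convergence alone gives no rate on the local density, and so it does not supply the two-sided polynomial comparison of gaps required there. Write $\mu_n := \frac{1}{n+1}\sum_{j=0}^n \delta_{b_{nj}}$ and $\mu := \varphi'(x)\,dx$, and let $F_n(t) := \mu_n([0,t])$, $\Phi(t) := \mu([0,t])$, and $M := \|\varphi'\|_\infty$. The one quantitative input I will use is
\begin{equation*}
\varepsilon_n := \|F_n - \Phi\|_\infty \longrightarrow 0,
\end{equation*}
which follows from weak-$^*$ convergence plus continuity of $\Phi$: pointwise convergence of the monotone $F_n$ to a uniformly continuous limit on $[0,1]$ is automatically uniform.

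The first step is to show that the maximum gap $\sigma_n := \max_j |b_{n(j+1)}-b_{nj}| = 2\max_j s_n(j)$ tends to $0$. This is where the hypothesis that $\{\varphi'=0\}$ is finite enters in a crucial way: every nondegenerate subinterval $I \subset [0,1]$ has $\mu(I) = \int_I \varphi'\,dx > 0$, so if $\sigma_n \not\to 0$ then a subsequence argument yields a limiting open interval of positive length carrying no points of the array, contradicting Portmanteau lower-semicontinuity on open sets.

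The second and main step is the estimate on $H_n(j)^{-1} = \frac{1}{n}\sum_k |m_{nj}-b_{nk}|^{-1}$. Setting $g_k := |m_{nj}-b_{nk}|$, the uniform control on $F_n$ gives the counting estimate $\#\{k : g_k \le t\} \le (n+1)(2Mt + 2\varepsilon_n)$ for all $t \ge 0$. The two nearest neighbors of $m_{nj}$ contribute exactly $2/s_n(j)$ to the sum, while integration by parts against this counting function on $[s_n(j),\infty)$ gives
\begin{equation*}
\sum_{k\neq j, j+1}\frac{1}{g_k} \;\le\; (n-1) \;+\; 2M(n+1)\,|\log s_n(j)| \;+\; \frac{2\varepsilon_n(n+1)}{s_n(j)}.
\end{equation*}
Multiplying by $s_n(j)/n$ and reinserting the $2/n$ from the closest terms yields, for $n$ large,
\begin{equation*}
\frac{s_n(j)}{H_n(j)} \;\le\; \frac{2}{n} \;+\; s_n(j) \;+\; 4M\, s_n(j)\,|\log s_n(j)| \;+\; 4\varepsilon_n,
\end{equation*}
and each of these four terms tends to $0$ uniformly in $j$ by using $s_n(j) \le \sigma_n/2 \to 0$, the continuity of $t \mapsto t|\log t|$ at $0^+$, and $\varepsilon_n \to 0$. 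This establishes Property $(\star)$.

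The main subtle point is the bookkeeping for the term $2\varepsilon_n(n+1)/s_n(j)$ in the bound on the sum: it carries a potentially dangerous $1/s_n(j)$ factor that blows up as $s_n(j) \to 0$, and without any rate on $\varepsilon_n$ one cannot control it on its own. The saving observation is that this term lives inside the sum before multiplication by the outside factor $s_n(j)/n$ in $s_n(j)/H_n(j)$, where the $s_n(j)$ cancels the $1/s_n(j)$ exactly, leaving only an $O(\varepsilon_n)$ contribution. It is this cancellation, together with $\sigma_n \to 0$, that lets the argument close using only weak-$^*$ convergence.
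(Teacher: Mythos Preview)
Your argument is correct and takes a genuinely different route from the paper's. The paper's proof simply invokes Theorem~\ref{thm:generalthm}: it lets $J$ be a small neighbourhood of the (finitely many) zeros of $\varphi'$, observes that $\varphi'$ attains a positive minimum $m$ and a finite maximum $M$ on $[0,1]\setminus J$, and asserts that the two-sided gap comparison with $\alpha_1=\alpha_2=1$, $B_1=m/2$, $B_2=2M$ then holds for $j/n\notin J$ and all $k$, once $n$ is large. Your approach bypasses Theorem~\ref{thm:generalthm} entirely: from the uniform CDF convergence $\|F_n-\Phi\|_\infty=\varepsilon_n\to0$ you extract the counting bound $\#\{k:|m_{nj}-b_{nk}|\le t\}\le(n+1)(2Mt+2\varepsilon_n)$, feed it into a layer-cake estimate for $1/H_n(j)$, and note that the dangerous $\varepsilon_n/s_n(j)$ contribution is exactly neutralised upon multiplying by $s_n(j)$.

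What this buys you is robustness. As you point out, weak-$^*$ convergence alone gives no quantitative lower bound on individual gaps---for instance one can have $\mu_n\to dx$ while $|b_{n1}-b_{n0}|=n^{-2}$---so the lower inequality $B_1|(j-k)/n|\le|b_{nj}-b_{nk}|$ required by Theorem~\ref{thm:generalthm} need not hold for any fixed $B_1>0$. Your direct argument never needs such a bound; the only place the hypothesis on $\{\varphi'=0\}$ enters is in forcing the maximal gap $\sigma_n\to0$. The paper's route is shorter when it applies, but it tacitly assumes more pointwise regularity of the array than bare weak-$^*$ convergence supplies, whereas your proof shows that Property~($\star$) is a genuine weak-$^*$ consequence once the limit density is bounded and has only finitely many zeros.
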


\begin{proof}
Let $J$ be a union of closed intervals of total length less than $\epsilon$ whose interior covers all the points $x$ where $\varphi'(x)=0$.  By continuity, $\varphi'>0$ on the closure of $[0,1]\setminus J$ and therefore attains a minimum $m$ and maximum $M$ on this set.  With this set $J$, the hypotheses of Theorem \ref{thm:generalthm} are satisfied (for sufficiently large $n$) when $B_1=m/2$, $B_2=2M$, and $\alpha_1=\alpha_2=1$. 
\end{proof}

\section{Approximation properties}\label{sec:approx}


Before stating our main theorem, we need to recall some basic notions and classical results.

Let $K\subset\CC$ be compact.  The \emph{$n$-th order diameter of $K$} is 
$$
d_n(K):=\sup\{ |VDM(a_1,\ldots,a_n)|^{\frac{2}{n(n+1)}}\colon \{a_1,\ldots,a_n\}\subset K\}
$$
where
$$
VDM(a_1,\ldots,a_n) := \det\begin{bmatrix}1&1&\cdots&1\\ a_1&a_2&\cdots&a_n\\
a_1^2&a_2^2&\cdots&a_n^2\\ \vdots&\vdots&\ddots&\vdots\\ a_1^{n-1}&a_2^{n-1}&\cdots&a_n^{n-1}  
\end{bmatrix}.
$$
is the Vandermonde determinant associated to the finite set $\{a_1,\ldots,a_n\}$.  

The \emph{transfinite diameter of $K$} is $$d(K):=\lim_{n\to\infty}d_n(K).$$  

Recall that a \emph{monic polynomial}  is of the form $z^n + (\hbox{terms of degree} <n)$, i.e., has leading coefficient 1. The $n$-th Chebyshev constant is 
$$
\tau_n(K):=\inf\{\|p\|_K\colon p \hbox{ is a monic polynomial of degree }\leq n\}^{1/n},
$$
and it is a classical result  that 
\begin{equation}\label{eqn:td} \lim_{n\to\infty}\tau_n(K)=d(K).  
\end{equation}
(See  e.g. Chapter 5 of \cite{ransford:potential}), where it is also shown that the transfinite diameter coincides with the potential-theoretic \emph{logarithmic capacity of $K$}.)

 Let $\calC=\{a_{nj}\}_{j=1,\ldots,n;n=1,2,\ldots}\subset K$ be an interpolation array. Given $n\in\NN$, define for each $j=1,\ldots,n$ the \emph{fundamental Lagrange interpolating polynomial}
$$\ell_j ^{(n)}(z):=\frac{VDM(a_{n1},\ldots,a_{n(j-1)},z,a_{n(j+1)},\ldots,a_{nn})}{VDM(a_{n1},\ldots,a_{nn})}.$$
This is the unique polynomial of degree $n-1$ that satisfies $\ell_j^{(n)}(a_{nj})=1$ and $\ell_j^{(n)}(a_{nk})=0$ if $k\neq j$.   The \emph{$n$-th Lebesgue constant} is 
$$
\Lambda_n:=\sup_{z\in K}\sum_{j=1}^n |\ell_j^{(n)}(z)|.
$$

We recall the following theorem relating the above notions to polynomial approximation on $K$.  Recall that `$\rightrightarrows$' denotes uniform convergence.

\begin{theorem}[\cite{bloomboschristensenlev:polynomial}, Theorem 1.5]   \label{thm:bbcl}
Let $K\subset\CC$ be a regular, polynomially convex, compact set. Consider the following four properties which an array $\{A_{nj}\}_{j=0,\ldots,n; n=1,2,\ldots}\subset K$ may or may not possess:
\begin{enumerate}
\item $\lim_{n\to\infty}\Lambda_n^{1/n} =1$;
\item $\lim_{n\to\infty} |VDM(A_{n0},\ldots,A_{nn})|^{\frac{2}{n(n+1)}}=d(K)$;
\item $\lim_{n\to\infty} \frac{1}{n+1}\sum_{j=0}^n\delta_{A_{nj}}=\mu_K$ weak$\,^*$;
\item $L_nf\rightrightarrows f$ on $K$ for each $f$ holomorphic on a neighborhood of $K$.
\end{enumerate}
Then (1) $\Rightarrow$ (2) $\Rightarrow$ (3) $\Rightarrow$ (4), and none of the reverse implications is true. \qed
\end{theorem}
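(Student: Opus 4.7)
The strategy is to establish each of the forward implications $(1)\Rightarrow(2)\Rightarrow(3)\Rightarrow(4)$ using a different classical tool from potential theory and complex analysis, and then to address the non-reversibility claims by indicating counterexamples. Throughout I would freely use $d(K) = \capp(K)$, the existence and uniqueness of the equilibrium probability measure $\mu_K$ on $K$ as the minimizer of the logarithmic energy $I(\mu) = -\iint \log|z-w|\,d\mu(z)\,d\mu(w)$, and the Bernstein--Walsh extremal principle.

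For $(1)\Rightarrow(2)$ I would compare the given array to Fekete points $\{F_{nj}\}$ (maximizers of $|VDM|$). Expanding $F_{ni}^k$ in the Lagrange basis $\{\ell_j^{(n)}\}$ attached to the nodes $\{A_{nj}\}$ gives $F_{ni}^k = \sum_j A_{nj}^k\,\ell_j^{(n)}(F_{ni})$, whence by multilinearity of the determinant $VDM(F) = VDM(A)\cdot\det[\ell_j^{(n)}(F_{ni})]$. Hadamard's inequality combined with the row-norm bound $\|(\ell_j^{(n)}(F_{ni}))_j\|_2 \leq \|\cdot\|_1 \leq \Lambda_n$ bounds the last determinant by $\Lambda_n^{n+1}$. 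Taking $\tfrac{2}{n(n+1)}$-th roots and invoking both $|VDM(F)|^{2/(n(n+1))} \to d(K)$ and $\Lambda_n^{1/n}\to 1$ yields the lower bound in (2); the upper bound $\leq d_n(K) \to d(K)$ is immediate from the definition of $d_n$.

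For $(2)\Rightarrow(3)$ I would work at the level of energies. Let $\mu_n := \frac{1}{n+1}\sum_j \delta_{A_{nj}}$; then $\log|VDM(A_{n0},\ldots,A_{nn})|^{2/(n(n+1))}$ equals $-I_n(\mu_n)$, the ``off-diagonal'' discrete logarithmic energy of $\mu_n$. Hypothesis (2) becomes $I_n(\mu_n) \to I(\mu_K)$. By weak-$*$ compactness of probability measures on $K$, pass to a subsequential limit $\mu$; lower semicontinuity of $I$ (proved by truncating the kernel $\log\tfrac{1}{|z-w|}$ from above) gives $I(\mu)\leq\liminf I_n(\mu_n) = I(\mu_K)$. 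Uniqueness of the equilibrium measure forces $\mu = \mu_K$, so every subsequential limit is $\mu_K$ and the full sequence converges weak-$*$ to $\mu_K$.

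For $(3)\Rightarrow(4)$ I would invoke the Hermite remainder formula: fixing a contour $\Gamma$ in the analyticity domain of $f$ enclosing $K$,
$$
f(z) - L_n f(z) = \frac{1}{2\pi\i}\int_\Gamma \frac{\omega_n(z)}{\omega_n(w)}\cdot \frac{f(w)}{w-z}\,dw,\qquad \omega_n(\zeta) := \prod_j (\zeta-A_{nj}).
$$
The main obstacle I anticipate is that weak-$*$ convergence of $\mu_n$ alone does not control $\tfrac{1}{n+1}\log|\omega_n|$ uniformly, because of the logarithmic singularities at the nodes. The workaround is that $\tfrac{1}{n+1}\log|\omega_n|$ is subharmonic, so the maximum principle together with Bernstein--Walsh upgrades weak-$*$ convergence of $\mu_n$ to uniform convergence $\tfrac{1}{n+1}\log|\omega_n(\zeta)|\to U^{\mu_K}(\zeta)$ on compact sets off $K$, and to an essentially uniform upper bound by $U^{\mu_K}$ on $K$. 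Since $U^{\mu_K}\equiv -\log d(K)$ on $K$ (using regularity of $K$) while $U^{\mu_K}(w) < -\log d(K)$ on $\Gamma$ (the difference is minus the Green's function of the unbounded component of $K^c$, which is strictly positive off $K$), the ratio $|\omega_n(z)/\omega_n(w)|$ decays geometrically uniformly on $K\times\Gamma$, which forces $L_n f\rightrightarrows f$. For the non-reversibility claims, I would cite the explicit counterexamples in \cite{bloomboschristensenlev:polynomial}: distinguishing arrays can be produced as controlled perturbations of Fekete or Chebyshev arrays that preserve some but not all of the distributional/energy properties while destroying the sub-exponential growth of $\Lambda_n$.
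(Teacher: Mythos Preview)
The paper does not prove this theorem at all: it is quoted verbatim from \cite{bloomboschristensenlev:polynomial} and closed immediately with \qed, so there is no in-paper argument to compare your proposal against.  Your sketch is therefore not ``the same'' or ``different'' from the paper's proof---the paper simply relies on the cited reference.

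As a standalone outline, your argument for $(1)\Rightarrow(2)$ via the change-of-basis identity $VDM(F)=VDM(A)\det[\ell_j^{(n)}(F_{ni})]$ together with Hadamard's inequality is the standard one and is fine.  The energy/uniqueness argument for $(2)\Rightarrow(3)$ is also the classical route.  The step that is not yet solid is $(3)\Rightarrow(4)$: weak-$^*$ convergence of $\mu_n$ to $\mu_K$ gives $\tfrac{1}{n+1}\log|\omega_n(w)|\to -U^{\mu_K}(w)$ uniformly on $\Gamma$ without difficulty, but the uniform upper bound you need on $K$, namely $\tfrac{1}{n+1}\log\|\omega_n\|_K\leq \log d(K)+o(1)$, does \emph{not} follow from the maximum principle or Bernstein--Walsh (Bernstein--Walsh only gives the trivial bound $|\omega_n(z)|\leq\|\omega_n\|_K$ for $z\in K$).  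What is actually needed here is the \emph{principle of descent}: if $\mu_n\to\mu_K$ weak-$^*$ and $z_n\to z$, then $\liminf U^{\mu_n}(z_n)\geq U^{\mu_K}(z)$; applying this along a sequence $z_n$ realizing $\min_{K}U^{\mu_n}$ and using regularity ($U^{\mu_K}\equiv -\log d(K)$ on $K$) yields the desired bound.  With that correction the Hermite remainder estimate goes through.  For the non-reversibility you correctly defer to the explicit counterexamples in \cite{bloomboschristensenlev:polynomial}.
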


\begin{remark}\rm A regular, polynomially convex compact set in $\CC$ is one whose complement consists of a single unbounded component, and whose potential-theoretic extremal function is continuous.  In particular, this is true for an  interval, and any  finite union of Jordan arcs or curves.   
\end{remark}

We can now prove our main theorem that relates Property (2) in Theorem \ref{thm:bbcl} to Property $(\star)$ from the previous section.

\begin{theorem}\label{thm:d}
Let $\fl=\{a_j\}_{j=1}^{\infty}$ denote the fast Leja points on the interval $I=[0,1]$.  Suppose Property $(\star)$  holds for $\fl$.  Then 
$$
\lim_{n\to\infty} |VDM(a_1,\ldots,a_n)|^{\frac{2}{n(n+1)}} = d(I).
$$
Hence $\fl$ is good for polynomial interpolation.
\end{theorem}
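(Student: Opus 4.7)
The plan is to reduce the statement to Property (2) of Theorem \ref{thm:bbcl}, i.e., to prove that $|VDM(a_1,\ldots,a_n)|^{2/n(n+1)} \to d(I)$. The upper bound $\limsup|VDM(a_1,\ldots,a_n)|^{2/n(n+1)} \leq d(I)$ is immediate from the definition of the $n$-th order diameter and equation \eqref{eqn:td} (equivalently, from $d_n(I)\to d(I)$). The work is entirely in the matching lower bound, and that is where Proposition \ref{prop:3} and Property $(\star)$ are supposed to do all the heavy lifting.

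To set up the lower bound I would write
\[
VDM(a_1,\ldots,a_n)=\prod_{k=2}^n q_k(a_k),\qquad q_k(z):=\prod_{i=1}^{k-1}(z-a_i),
\]
so that each $q_k$ is monic of degree $k-1$, whence $\|q_k\|_I \geq \tau_{k-1}(I)^{k-1}$. The fast Leja rule says $|q_k(a_k)|=\max_j|q_k(m_j)|$ over the current candidates $m_j$, and (because of the midpoint splitting rule together with the midpoint initialization) the candidates at stage $k-1$ are exactly the midpoints of all subintervals of $I$ cut out by the sorted points $\{a_1,\dots,a_{k-1}\}$. So on whichever subinterval $[b,b']$ the sup $\|q_k\|_I$ is attained, I can apply Proposition \ref{prop:3} (after translating its midpoint $m$ to the origin and setting $\epsilon=(b'-b)/2$; the zeros of $q_k$ satisfy the required separation since $b,b'$ are zeros and the rest lie strictly outside $(b,b')$) to obtain
\[
\|q_k\|_{[b,b']} \ \leq\ 2\exp\!\Bigl(\epsilon\sum_{i=1}^{k-1}\tfrac{1}{|m-a_i|}\Bigr)|q_k(m)|.
\]
With the notation of Section \ref{sec:star} applied to $\calC_{k-1}=\{a_1,\ldots,a_{k-1}\}$ (sorted), the exponent is exactly $(k-1)\,s_{k-1}(i)/H_{k-1}(i)\leq (k-1)M_{k-1}$, where $M_j:=\max_i s_j(i)/H_j(i)$. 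Since $|q_k(m)|\leq |q_k(a_k)|$ (the candidate $a_k$ being maximal), this gives
\[
|q_k(a_k)| \ \geq\ \frac{\|q_k\|_I}{2\exp\bigl((k-1)M_{k-1}\bigr)} \ \geq\ \frac{\tau_{k-1}(I)^{k-1}}{2\exp\bigl((k-1)M_{k-1}\bigr)}.
\]

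Taking logarithms, summing from $k=2$ to $n$, and dividing by $n(n+1)/2$ yields
\[
\tfrac{2}{n(n+1)}\log|VDM(a_1,\ldots,a_n)| \ \geq\ \tfrac{2}{n(n+1)}\sum_{k=2}^n\!\bigl((k-1)\log\tau_{k-1}(I) \,-\, (k-1)M_{k-1} \,-\, \log 2\bigr).
\]
As $n\to\infty$ I would handle the three pieces separately: the $\log 2$ piece is $O(1/n)$; the $\tau$ piece tends to $\log d(I)$ by a Stolz--Ces\`aro argument (weighted Ces\`aro mean of a convergent sequence, together with $(n-1)/(n+1)\to 1$, using \eqref{eqn:td}); and Property $(\star)$ gives $M_j\to 0$, which by the standard fact ``$M_j\to 0 \Rightarrow n^{-2}\sum_{j=1}^n jM_j\to 0$'' (split the sum at a threshold beyond which $M_j<\varepsilon$) kills the middle piece. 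Combined with the upper bound, this gives Property (2) of Theorem \ref{thm:bbcl}, and (2)$\Rightarrow$(4) of that theorem delivers the claim that $\fl$ is good for polynomial interpolation.

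The main obstacle I anticipate is bookkeeping rather than mathematics: verifying cleanly that Proposition \ref{prop:3} can in fact be invoked on \emph{every} subinterval hosting the sup of $|q_k|$ (hence the remark above about the midpoint initialization) and that the exponent it produces matches $(k-1)M_{k-1}$ uniformly in $k$. Once those matches are established, the Ces\`aro-type passage to the limit is routine and the theorem follows.
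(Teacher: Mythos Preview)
Your proposal is correct and follows essentially the same route as the paper: the trivial upper bound, the factorization $|VDM|=\prod_k|q_k(a_k)|$ (equivalently the telescoping $L_n/L_{n-1}=|p_n(a_n)|$), the application of Proposition~\ref{prop:3} on the subinterval carrying $\|q_k\|_I$ together with the fact that its midpoint is a candidate, the Chebyshev lower bound $\|q_k\|_I\ge\tau_{k-1}^{\,k-1}$, and the weighted Ces\`aro/Property~($\star$) passage to the limit. Your bookkeeping is actually slightly cleaner than the paper's (which has a harmless $n$ vs.\ $n-1$ slip in the exponent), and your flagged ``obstacle'' about candidates coinciding with midpoints is resolved exactly as you suspect, by the paper's standing convention that candidate points are always midpoints.
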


\begin{proof}
Let $$L_n:=VDM(a_1,\ldots,a_n)=\prod_{j\neq k} |a_j-a_k|.$$  
By definition, $\limsup_{n\to\infty} L_n^{\frac{2}{n(n+1)}} \leq d(I)$ so it  suffices to prove $\liminf_{n\to\infty} L_n^{\frac{2}{n(n+1)}} \geq d(I)$.  

Let $p_n(z)=\prod_{j=1}^{n-1}(z-a_j)$ for each integer $n>1$.   By a calculation,
$$
\dfrac{L_{n}}{L_{n-1}} = |p_n(a_n)|.
$$
Take $b\in I$ such that $\|p_n\|_I=|p_n(b)|$.  Then $b\in(a_{k_1},a_{k_2})$ where $a_{k_1},a_{k_2}$ are neighboring fast Leja points in $\fl_n$.  
Let $a_{k_1k_2}=\tfrac{1}{2}(a_{k_1}+a_{k_2})$ denote their midpoint.  Then applying Proposition \ref{prop:3} with  $a_{k_1k_2}$ translated to $0$, we have the estimate  
\begin{equation}\label{eqn:t5}
\frac{\|p_n\|_I}{|p(a_{k_1k_2})|}  = \frac{\|p_n\|_{[a_{k_1},a_{k_2}]}}{|p(a_{k_1k_2})|}     \leq 2\exp\left( |a_{k_1k_2}-a_{k_1}| \sum_{j=1}^{n}\frac{1}{|a_{k_1k_2}- a_j | }    \right).
\end{equation}
Note that $a_{k_1k_2}$ is a competitor for the next fast Leja point $a_n$, so $|p_n(a_{k_1k_2})|\leq |p_n(a_n)|$. Using this and \eqref{eqn:t5}, 
$$
\|p_n\|_I  \leq 2\exp\left( |a_{k_1k_2}-a_{k_1}| \sum_{j=1}^{n-1}\frac{1}{|a_{k_1k_2}- a_j | }    \right) |p_n(a_{k_1k_2})|
\leq 2\exp\left(n\frac{s_n(k_1)}{ H_{n}(k_1)}\right) |p_n(a_n)| .
 $$

Now $\|p_n\|_I\geq\tau_{n-1}(I)^{n-1}$, since $p_n$ is monic of degree $n-1$.  Hence
$$
\tau_{n-1}(I)^{n-1}\leq 2\exp\left(n\frac{s_{n}(k_1)}{ H_{n}(k_1)}\right)\frac{L_n}{L_{n-1}}.
$$
Let $\epsilon>0$.  Using property ($\star$), choose $N$ such that $\dfrac{s_n(j)}{H_n(j)}<\epsilon$ for all $n>N$ and $j<n$.  Then
$$\begin{aligned}
L_n &= \frac{L_n}{L_{n-1}}\frac{L_{n-1}}{L_{n-2}}\cdots\frac{L_{N+1}}{L_{N}}L_{N}  \\
& \geq \tau_{n-1}(I)^{n-1}\tau_{n-2}(I)^{n-2}\cdots\tau_{N+1}(I)^{N+1}
2^{n-N-1}\exp\left(-\epsilon \sum_{j=N+1}^n j \right)L_N.
\end{aligned}$$
In view of \eqref{eqn:td},  $\tau_j(I)\to d(I)$ as $j\to\infty$.  Hence the weighted geometric averages also converge:
$$
\left(\prod_{j=N+1}^n \tau_{j}(I)^{j}\right)^{1/\left(\sum j\right)} \longrightarrow d(I) \hbox{ as } n\to\infty.
$$
Also $$\displaystyle \sum_{j=N+1}^n j =\frac{(n+N+1)(n-N)}{2}=\frac{n^2}{2} +O(n).$$  So for any $n>N$ sufficiently large, 
$$
(L_n)^{\frac{2}{n(n+1)}} \geq (d(I)-\epsilon)^{\frac{n^2+O(n)}{n^2+n}}2^{\frac{2(n-N-1)}{n(n+1)}}
\exp\left( -\epsilon\frac{n^2+O(n) }{n^2+n}    \right) L_N^{\frac{1}{n(n+1)}}.
$$
Letting $n\to\infty$,
$$
\liminf_{n\to\infty}  (L_n)^{\frac{2}{n(n+1)}} \geq (d(I)-\epsilon) e^{-\epsilon}
$$
and since $\epsilon>0$ was arbitrary, $\liminf_{n\to\infty}  (L_n)^{\frac{2}{n(n+1)}}\geq d(I)$.

So $\lim_{n\to\infty} (L_n)^{\frac{2}{n(n+1)}} = d(I)$. Hence $\calF$ satisfies the second condition in Theorem \ref{thm:bbcl}, so is good for polynomial interpolation.
 \end{proof}

\begin{remark} \rm The above proof is based on the fact that the ratio $\dfrac{\|p_n\|_I}{|p(a_n)|}$ does not grow exponentially.  A sequence of points $\{a_1,a_2,\ldots\}$ for which the ratio has subexponential growth (where $p_n(z)=(z-a_1)\cdots(z-a_{n-1})$) is called a \emph{pseudo Leja sequence}.  Bia\l as-Ciez and Calvi defined pseudo Leja sequences in \cite{bc:pseudo} and asked whether fast Leja points give a pseudo Leja sequence. By the above proof, the answer will be yes if Property ($\star$) holds.
\end{remark}

\section{Final Remarks}

\begin{enumerate}[1.]
\item   The proof of Theorem \ref{thm:d} goes through with no modification if we replace the interval $I$ by a finite union of real closed intervals.  Start with an initial Leja set consisting of the end points of each interval, and an initial candidate set consisting of the midpoints of each interval, then run the fast Leja algorithm as usual.  It may be possible to use a similar proof when $K$ is a finite union of smooth arcs or curves. (Note that the results of Section \ref{sec:sup} are only proved for real points.)

\item 
  Numerical evidence suggests that in fact, the ratio $\dfrac{\|p_n\|_I}{|p(a_n)|}$ is uniformly bounded (by some constant $<2$, see Figure 3).  Points with this stronger property are called \emph{$\tau$-Leja points}  \cite{an:simple}: for $\tau\in(0,1)$, a sequence of points $\{z_0,z_1,\ldots\}\subset I$ is $\tau$-Leja if 
$$
\tau\|p_n\|_I\leq |p_n(z_n)|
$$
where $p_n(z)=(z-z_0)\cdots(z-z_{n-1})$. 

 Totik recently proved in \cite{totik:lebesgue} that $\tau$-Leja points on a set with positive logarithmic capacity satisfy $\lim_{n\to\infty}\Lambda_n^{1/n} = 1$, the strongest property in Theorem \ref{thm:bbcl}.

\medskip

\noindent{\bf Conjecture.} {\it Fast Leja points on an interval are $\tau$-Leja points for some $\tau\in(\frac{1}{2},1)$.  }

\end{enumerate}

\begin{figure}
\noindent\includegraphics[height=8cm]{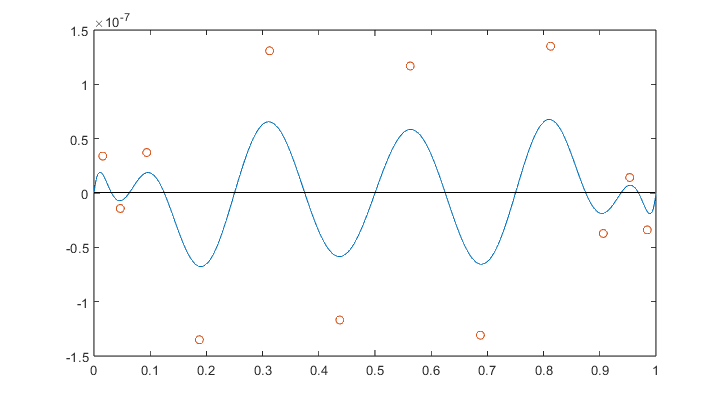}
\caption{A plot of the graph $(x,p_{13}(x))$, where the zeros of $p_{13}$ are the first 13 fast Leja points on $[0,1]$.  The points $(m_j,2p_{13}(m_j))$ are indicated by circles, where $m_j$ is the midpoint of the $j$-th interval between adjacent fast Leja points.  These midpoints are the candidates for the next fast Leja point.}
\end{figure}

\bibliographystyle{abbrv}
\bibliography{myreferences2024}

\begin{thebibliography}{1}

\bibitem{an:simple}
V.~Andrievskii and F.~Nazarov.
\newblock A simple upper bound for {L}ebesgue constants associated with {L}eja
  points on the real line.
\newblock {\em J. Approx. Theory}, 275:Paper No. 105699, 13pp., 2022.

\bibitem{bcr:fast}
J.~Baglama, D.~Calvetti, and L.~Reichel.
\newblock Fast {L}eja points.
\newblock {\em Electron. Trans. Numer. Anal.}, 7:124--140, 1998.

\bibitem{bc:pseudo}
L.~Bia{\l}as-Ciez and J.-P. Calvi.
\newblock Pseudo {L}eja sequences.
\newblock {\em Ann. Mat. Pura Appl. Series IV}, 191(1):53--75, 2012.

\bibitem{bloomboschristensenlev:polynomial}
T.~Bloom, L.~Bos, C.~Christensen, and N.~Levenberg.
\newblock Polynomial interpolation of holomorphic functions in $\mathbb{C}$ and
  $\mathbb{C}^n$.
\newblock {\em Rocky Mountain J. Math}, 22(2):441--470, 1992.

\bibitem{bdMsv:computing}
L.~Bos, S.~De{M}archi, A.~Sommariva, and M.~Vianello.
\newblock Computing multivariate {F}ekete and {L}eja points by numerical linear
  algebra.
\newblock {\em Siam. J. Numer. Anal.}, 48(5):1984--1999, 2010.

\bibitem{cl:uniform}
J.~{P}aul {C}alvi and N.~Levenberg.
\newblock Uniform approximation by discrete least squares polynomials.
\newblock {\em J. Approx. Theory}, 152:82--100, 2008.

\bibitem{ransford:potential}
T.~Ransford.
\newblock {\em Potential Theory in the Complex Plane}.
\newblock Cambridge University Press, Cambridge, 1995.

\bibitem{totik:lebesgue}
V.~Totik.
\newblock The {L}ebesgue constants for {L}eja points are subexponential.
\newblock {\em J. Approx. Theory}, 287:Paper No. 105863, 15pp., 2023.

\bibitem{mv:weakly}
M.~Vianello.
\newblock Webpage: \texttt{https://www.math.unipd.it/\~marcov/CAAwam.html}.

\end{thebibliography}

\end{document}